\renewcommand{\S}{{\mathbb{S}}}
\newcommand{\eps}{\varepsilon}
\newcommand{\E}{{\mathbb{E}}}
\newcommand{\K}{{\mathcal{K}}}
\newcommand{\J}{{\mathcal{J}}}
\newcommand{\erf}{{\rm erf}}
\newcommand{\intr}{{\rm int}}
\renewcommand{\phi}{{\boldsymbol{\varphi}}}
\newcommand{\qtq}[1]{{\quad\text{#1}\quad}}
\newtheorem{theorem}{Theorem}
\newtheorem{proposition}[theorem]{Proposition}
\newtheorem{corollary}[theorem]{Corollary}
\theoremstyle{remark}
\newtheorem{remark}[theorem]{Remark}
\title{On Hadwiger's covering problem in small dimensions}
\author{A. Arman}
\address{Department of Mathematics, University of Manitoba, Winnipeg, MB, R3T 2N2, Canada}
\email{andrew0arman@gmail.com}
\thanks{The first author was supported by the University of Manitoba Research Grant Program project ``Covering problems with applications in Euclidean Ramsey theory and convex geometry'' and PIMS PDF}
\author{A.\ Bondarenko}
\address{Department of Mathematical Sciences, Norwegian University of Science and Technology, NO-7491 Trondheim, Norway}
\email{andriybond@gmail.com}
\thanks{The second author was supported in part by Grant 275113 of the Research Council of Norway.}
\author{A.\ Prymak}
\address{Department of Mathematics, University of Manitoba, Winnipeg, MB, R3T 2N2, Canada}
\email{prymak@gmail.com}
\thanks{Corresponding author: Andriy Prymak; email: {\tt prymak@gmail.com} \\ \phantom{fff} The third author was supported by NSERC of Canada Discovery Grant RGPIN-2020-05357.}
\keywords{Illumination problem, illumination number, covering by smaller homothetic copies, John's position, quermassintegrals}
\subjclass[2010]{Primary 52A20; Secondary 52A39, 52A40, 52C07, 52C17}
\begin{document}

\begin{abstract}
	
Let $H_n$ be the minimal number such that any $n$-dimensional convex body can be covered by $H_n$ translates of interior of that body. Similarly $H_n^s$ is the corresponding quantity for symmetric bodies. It is possible to define $H_n$ and $H_n^s$ in terms of illumination of the boundary of the body using external light sources, and the famous Hadwiger's covering conjecture (illumination conjecture) states that $H_n=H_{n}^s=2^n$. 

In this note we obtain new upper bounds on $H_n$ and $H_{n}^s$ for small dimensions $n$. Our main idea is to cover the body by translates of John's ellipsoid (the inscribed ellipsoid of the largest volume). Using specific lattice coverings, estimates of quermassintegrals for convex bodies in John's position, and calculations of mean widths of regular simplexes, we prove the following new upper bounds on $H_n$ and $H_n^s$: $H_5\le 933$, $H_6\le 6137$, $H_7\le 41377$, $H_8\le 284096$, $H_4^s\le 72$, $H_5^s\le 305$, and $H_6^s\le 1292$.

For larger $n$, we describe how the general asymptotic bounds $H_n\le \binom{2n}{n}n(\ln n+\ln\ln n+5)$ and $H_n^s\le 2^n n(\ln n+\ln\ln n+5)$ due to Rogers and Shephard can be improved for specific values of $n$.
\end{abstract}

\maketitle

\section{Introduction}

Let $\K_n$ be the family of all convex bodies in $\E^n$, i.e., all convex compact sets $K\subset\E^n$ with a nonempty interior ($\intr(K)\neq \emptyset$). For $A,B\subset \E^n$, we denote by
\[
C(A,B):=\min \left\{N:\exists t_1,\dots,t_N\in\E^n \text{ satisfying } A\subset \bigcup_{j=1}^N(t_j+B)\right\},
\]
the minimal number of translates of $B$ needed to cover $A$.

Hadwiger~\cite{Ha} raised the question of determining the value of
\[
H_n=\min \{C(K,\intr(K)):K\in\K_n\}
\]
for all $n\ge 3$. 

Considering an $n$-cube, one immediately sees that $H_n\ge 2^n$, and the well-known Hadwiger's covering conjecture states that $H_n=2^n$ for $n\ge 3$ with equality only for parallelepipeds. It was shown that $H_2=4$ by Levi~\cite{Le}. Aside from Levi and Hadwiger, the conjecture may be associated with the names of Boltyanski, who in~\cite{Bo} has established an equivalent formulation in terms of illumination of the boundary of the body by external light sources, and with Gohberg-Markus~\cite{Go-Ma} who asked the question in terms of the minimal number of smaller homothetic copies of $K$ required to cover $K$. As of today, the conjecture, which is also known as the illumination conjecture, is wide open. For details about the history and partial results for special classes of convex bodies see, e.g., \cite{Be}. 

In what follows we will outline the current approaches in obtaining the upper bounds on $H_n$ and our modifications allowing to obtain new bounds.  

To this end, let  $\K_n^s$ is the subfamily of $\K_n$ consisting of centrally symmetric convex bodies, and define $$H_n^s=\min \{C(K,\intr(K)):K\in\K_n^s\}.$$ 

The best known \emph{explicit} upper bound on $H_n$ in high dimensions follows from the results of Rogers and Zong~\cite{Ro-Zo}:
\begin{equation}\label{eqn:asymptotic}
	H_n\le \binom{2n}{n}n(\ln n+\ln\ln n+5).
\end{equation}
The asymptotic behavior of $\binom{2n}{n}$ is $\frac{4^n}{\sqrt{2\pi n}}$, so this upper bound is  $(4+o(1))^n$ while the conjecture states $H_{n}=2^n$. Remarkable sub-exponential improvements of~\eqref{eqn:asymptotic} have been obtained only recently by Huang, Slomka, Tkocz and Vritsiou~\cite{HSTV}. Namely, using ``thin-shell'' volume estimates they obtained
\begin{equation}\label{eqn:HSTV}
	H_n\le \exp(-c\sqrt{n}) 4^n.
\end{equation}
This was later improved, using the new breakthrough bound on isotropic constant, by Campos, van Hintum, Morris and Tiba~\cite{CHMT} to
\begin{equation}\label{eqn:CHMT}
	H_n\le \exp\Bigl(\frac{-cn}{\ln^8 n}\Bigr)4^n.
\end{equation}
Both constants $c$ in~\eqref{eqn:HSTV} and~\eqref{eqn:CHMT} are independent of $n$ but not given explicitly. For centrally symmetric bodies, one has
\begin{equation}\label{eqn:sym-asymp}
	H_n^s\le 2^n n(\ln n+\ln\ln n+5),
\end{equation} 
which is asymptotically close to the conjectured $2^n$ (see~\cite{Ro-Zo}). 

The first step in obtaining any of the inequalities~\eqref{eqn:asymptotic}--\eqref{eqn:sym-asymp} is the result of Rogers~\cite{Ro} on covering density of $\E^n$ by translates of a body $L$. Namely, let \begin{equation*}
\theta(L)=\lim_{R\to\infty}\inf_{\Lambda}\left\{ 
\frac{|L|\#\Lambda}{(2R)^n}: [-R,R]^n\subset \bigcup_{\lambda\in\Lambda} (x_\lambda+L)
\right\}
\end{equation*}
denote the covering density of space by translates of $L$, where $|L|$ is the volume of $L$, and $\#\Lambda$ is the cardinality of $\Lambda$. Rogers~\cite{Ro} showed that for any body $L\in \K_n$,
\begin{equation}\label{eqn:rogers}
\theta(L)\le n(\ln n+\ln\ln n+5).	
\end{equation}

The second step is the use of the inequality
\begin{equation}\label{eqn:rogerszong}
	C(K,L)\le \frac{|K-L|}{|L|}\theta(L),
\end{equation}
which is valid for any $K,L\in\K^n$ and was proved by Rogers and Zong~\cite{Ro-Zo}. 

For symmetric bodies, one obtains~\eqref{eqn:sym-asymp} from~\eqref{eqn:rogerszong} by choosing $L=(1-\eps)K$ with $\eps\to0+$ and using the fact that $|K-K|=|2K|=2^n|K|$ for $K\in\K_n^s$. For general bodies, \eqref{eqn:asymptotic} follows from~\eqref{eqn:rogerszong} by applying the Rogers-Shephard (see~\cite{Ro-Sh}) inequality $|K-K|\le \binom{2n}{n}|K|$ for $K\in \K_n$. 

The improvements~\eqref{eqn:HSTV} and~\eqref{eqn:CHMT} are both based on choosing $L$ to be the largest (by volume) centrally symmetric subset of $K$. With such choice, we have $|K-L|/|L|\le |2K|/|L|\le 2^n \Delta_{KB}(K)^{-1}$, where
\[
\Delta_{KB}(K):=\max_{x\in \E^n}\frac{|K\cap (x-K)|}{|K|}
\]
is the K\"ovner-Besicovitch measure of symmetry of $K$. Simple averaging gives $\Delta_{KB}(K)\ge 2^{-n}$ for any $K\in \K_n$, while~\cite{HSTV} and~\cite{CHMT} obtain the corresponding sub-exponential improvements to the lower bound on $\Delta_{KB}(K)$,  yielding~\eqref{eqn:HSTV} and~\eqref{eqn:CHMT}.

For low dimensions, Lassak~\cite{La} showed that
\begin{equation}\label{eqn:lassak}
	H_n\le (n+1)n^{n-1}-(n-1)(n-2)^{n-1},
\end{equation}
which outperforms~\eqref{eqn:asymptotic} for $n\le 5$. For $n=3$ this gives $H_3\le 34$, which was improved to $H_3\le20$ by Lassak~\cite{La2}, then to $H_3\le 16$ by Papadoperakis~\cite{Pa}, and then to $H_3\le14$ by Prymak~\cite{Pr}. For slightly larger dimensions, it was shown in~\cite{Pr-Sh} that $H_4\le 96$, $H_5\le 1091$ and $H_6\le 15373$ improving both~\eqref{eqn:asymptotic} and~\eqref{eqn:lassak} for $n=4,5,6$. Then Diao~\cite{D} obtained $H_5\le 1002$ and $H_6\le 14140$. All of these results in low dimensions were based on comparing the body with a suitable parallelepiped. For the symmetric case, Lassak~\cite{La3ds} obtained the sharp result $H_3^s=8$, but for $n\ge 4$ no estimate better than~\eqref{eqn:sym-asymp} or than the corresponding bound on $H_n$ (obviously, $H_n^s\le H_n$) was known.

Our main result is the following new bounds.
\begin{theorem}\label{thm:5678}
	$H_5\le 933$, $H_6\le 6137$, $H_7\le 41377$, $H_8\le 284096$, $H_4^s\le 72$, $H_5^s\le 305$, and $H_6^s\le 1292$.
\end{theorem}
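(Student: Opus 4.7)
The overall plan is to apply the Rogers--Zong inequality~\eqref{eqn:rogerszong} with $L$ chosen to be a slight contraction of the John's ellipsoid of $K$. This replaces the usually large factor $|K-L|/|L|$ with the more manageable ratio $|K+B^n|/|B^n|$, and lets us invoke the well-studied covering density $\theta(B^n)$ of the Euclidean ball, for which good explicit bounds are available in low dimensions.

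Since $C(K,\intr(K))$ is affine invariant, I may assume that the John's ellipsoid of $K$ is the unit Euclidean ball $B=B^n$ centered at the origin; for every $\eps\in(0,1)$ the contraction $L_\eps:=(1-\eps)B$ satisfies $L_\eps\subset \intr(K)$, so any cover of $K$ by translates of $L_\eps$ is automatically a cover by translates of $\intr(K)$, giving $C(K,\intr(K))\le C(K,L_\eps)$. Applying~\eqref{eqn:rogerszong} to the pair $(K,L_\eps)$, noting $|K-L_\eps|=|K+L_\eps|$ since $L_\eps$ is centrally symmetric, and letting $\eps\to 0^+$ yields the key estimate
\[
C(K,\intr(K))\le \frac{|K+B|}{|B|}\,\theta(B^n).
\]

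To control the numerator I use Steiner's formula
\[
|K+B|=\sum_{k=0}^{n}\binom{n}{k}W_k(K),
\]
where $W_k(\cdot)$ denote the quermassintegrals. With $K$ in John's position each $W_k(K)$ is bounded above by the corresponding quantity for the extremal body in that class: the regular simplex $S_n$ circumscribing $B$ (for arbitrary $K\in\K_n$), and the cube $Q_n=[-1,1]^n$ (for $K\in\K_n^s$). The symmetric case is relatively clean, as $W_k(Q_n)$ admits a closed form coming from the product structure of the cube and the explicit formula for $|Q_n+rB|$. The general case rests on proving $W_k(K)\le W_k(S_n)$ for every $k$: for $k=n-1$ this reduces, via $W_{n-1}(K)=\tfrac12|B|\,\bar w(K)$, to a sharp estimate on the mean width of a body in John's position, while the other indices are handled through Kubota's projection formula together with a dimension-reduction argument relating projections of a body in John's position to lower-dimensional John data. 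Computing $W_k(S_n)$ and $W_k(Q_n)$ explicitly---numerically in the simplex case via spherical integration---is the core technical input.

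The final step is to plug in the best available numerical estimates of $\theta(B^n)$ for $n=4,5,6,7,8$, which come from explicit lattice coverings based on standard root lattices (such as $A_n^*$, $D_n^*$, $E_n^*$) and suitable variants in the relevant dimensions. Assembling the pieces---the Steiner expansion, the quermassintegral bounds, and the lattice-covering density estimates---then produces the claimed numerical upper bounds by direct arithmetic. The hardest step will be the explicit computation of mean widths and higher quermassintegrals of regular simplexes: the sphere integrals involved do not simplify to elementary closed forms and require careful numerical integration and trigonometric bookkeeping. Equally, any small improvement in the lattice-covering estimates $\theta(B^n)$ would directly propagate to sharper final bounds.
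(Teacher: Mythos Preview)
Your overall strategy---John's position, the Rogers--Zong bound with $L$ a contracted ball, Steiner's expansion, and lattice covering densities for $\theta(B_2^n)$---is exactly the paper's. The gap is in how you bound the intermediate quermassintegrals.

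You assert that $W_k(K)\le W_k(S_n)$ (resp.\ $W_k(K)\le W_k(Q_n)$) for \emph{every} $k$ when $K$ is in John's position, and propose to obtain the intermediate indices ``through Kubota's projection formula together with a dimension-reduction argument relating projections of a body in John's position to lower-dimensional John data.'' This is precisely the step that is not known. The extremality of the simplex (resp.\ cube) among bodies in John's position is established only for $k=0$ (Ball's volume ratio), $k=1$ (Ball's reverse isoperimetric inequality), $k=n-1$ (Barthe / Schechtman--Schmuckenschl\"ager mean width), and trivially $k=n$. For $2\le k\le n-2$ the inequality $W_k(K)\le W_k(\Delta^n)$ is a natural conjecture the paper explicitly records but does not use. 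Your proposed Kubota reduction fails because an orthogonal projection of a body in John's position is generally \emph{not} in John's position in the image subspace (the John ellipsoid does not commute with projection), so one cannot simply iterate the known low-index results downward.

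What the paper does instead is avoid the unproven extremality for intermediate $k$: it bounds $W_i(K)$ for small $i$ by the monotonicity $W_i(K)\le W_0(K)\le W_0(\Delta^n)$, uses the Barthe bound for $W_{n-1}$, and then controls the remaining intermediate $W_j$ via the Bokowski--Heil Bonnesen-type inequality
\[
W_j(K)\le \frac{(k-j)(i+1)R^iW_i(K)+(j-i)(k+1)R^kW_k(K)}{(k-i)(j+1)R^j},
\]
valid for $K\subset R B_2^n$ with $R=n$ (general case) or $R=\sqrt{n}$ (symmetric case), applied with endpoints $i,k$ for which $W_i,W_k$ are already controlled. That inequality, not a full $W_k$-extremality of the simplex or cube, is the missing ingredient in your outline.
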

The main idea of the proof is to utilize~\eqref{eqn:rogerszong} with $L$ being the maximal (by volume) inscribed ellipsoid into $K$ (such ellipsoids were characterized by John~\cite{J}). After an appropriate affine transform, we can assume that $L$ is the unit ball $B_2^n$ in $\E^n$, while $K$ is in the so-called John's position, so applying~\eqref{eqn:rogerszong} we get
\begin{equation}\label{eq:ourbound}C(K, \intr{K})\leq \frac{|K+B_2^n|}{|B_2^n|}\theta(B_2^n).\end{equation}

Next we make use of several geometric results which allow us to obtain an upper bound on $|K+B_2^n|$. One ingredient in such estimates is the fact that the mean width and the volume of a body in John's position are largest for the regular simplex (general case) or for the cube (symmetric case), the results due to Ball~\cite{Ba}, Barthe~\cite{Bar}, Schechtman and Schmuckenschl\"ager~\cite{Sc-S}. Another ingredient in our estimates is a Bonnnesen-type inequality by Bokowski and Heil~\cite{Bo-He} on quermassintegrals of $K$. Finally, we use upper bounds on $\theta(B_2^n)$ for specific small $n$ which arise from known lattice coverings. 

\cref{thm:5678} is proved in Sections~\ref{sec:prel}--\ref{sec:proof1}. Of possibly independent interest are estimates of the mean width of regular simplex in dimensions $5\le n\le 8$ given in \cref{sec:mean width simplex}. 

In \cref{sec:rogers opt} we show how one can improve~\eqref{eqn:rogers} for each fixed $n$ by optimizing choices of certain parameters in the original proof of Rogers~\cite{Ro} (the original proof provides a succinct bound valid for all $n$). Consequently, \eqref{eqn:asymptotic} and \eqref{eqn:sym-asymp} can be somewhat improved for $n$ larger than those covered by \cref{thm:5678}. 

To finalize, in~\cref{tbl:hn,tbl:hns} we provide the best known upper bounds on $H_n$ and $H_n^s$ for $3\le n\le 14$.

\begin{table}[h!]
	\begin{center}
		\begin{tabular}{|c|c|c||c|c|c|}
			\hline
			$n$ & $H_n\le$ & reference & $n$ & $H_n\le$ & reference  \\
			\hline
			\hline
			$3$ & $14$ & \cite{Pr} & $9$ & $2\,064\,332$ & Prop.~\ref{cor:hn and hns} \\
			\hline
			$4$ & $96$ & \cite{Pr-Sh} & $10$ & $8\,950\,599$ & Prop.~\ref{cor:hn and hns} \\
			\hline
			$5$ & $933$ & Th.~\ref{thm:5678} &$11$ & $38\,482\,394$ & Prop.~\ref{cor:hn and hns}\\
			\hline
			$6$ & $6\,137$ & Th.~\ref{thm:5678} & $12$ & $164\,319\,569$ & Prop.~\ref{cor:hn and hns}\\
			\hline
			$7$ & $41\,377$ & Th.~\ref{thm:5678}& $13$ & $697\,656\,132$ & Prop.~\ref{cor:hn and hns}\\
			\hline
			$8$ & $284\,096$ & Th.~\ref{thm:5678}& $14$ & $2\,947\,865\,482$ & Prop.~\ref{cor:hn and hns}\\
			\hline
		\end{tabular}
	\end{center}
	\caption{Best known upper bounds on $H_n$ for $3\le n\le 14$.}
	\label{tbl:hn}
\end{table}

\begin{table}[h!]
	\begin{center}
		\begin{tabular}{|c|c|c||c|c|c|}
			\hline
			$n$ & $H_n^s\le$ & reference & $n$ & $H_n^s\le$ & reference  \\
			\hline
			\hline
			$3$ & $8$ & \cite{La3ds} & $9$ & $21\,738$ & Prop.~\ref{cor:hn and hns} \\
			\hline
			$4$ & $72$ & Th.~\ref{thm:5678} & $10$ & $49\,608$ & Prop.~\ref{cor:hn and hns} \\
			\hline
			$5$ & $305$ & Th.~\ref{thm:5678} &$11$ & $111\,721$ & Prop.~\ref{cor:hn and hns}\\
			\hline
			$6$ & $1\,292$ & Th.~\ref{thm:5678} & $12$ & $248\,895$ & Prop.~\ref{cor:hn and hns}\\
			\hline
			$7$ & $3\,954$ & Prop.~\ref{cor:hn and hns}& $13$ & $549\,506$ & Prop.~\ref{cor:hn and hns}\\
			\hline
			$8$ & $9\,370$ & Prop.~\ref{cor:hn and hns}& $14$ & $1\,203\,936$ & Prop.~\ref{cor:hn and hns}\\
			\hline
		\end{tabular}
	\end{center}
	\caption{Best known upper bounds on $H_n^s$ for $3\le n\le 14$.}
	\label{tbl:hns}
\end{table}

\section{Preliminaries}\label{sec:prel}
In what follows, $B_2^{n}=\{x\in \E^{n} \;: \|x\|\leq 1\}$, and $\S^{n-1}=\{x\in \E^{n} \;: \|x\|= 1\}.$ The $n$-dimensional volume is denoted by $|\cdot |_n$, and subscript is usually dropped if the value of $n$ is clear from the context.

\subsection{John's position}
Let $\J_n$ be the family of convex bodies $K\in \K_n$ which are in John's position, i.e. $B_2^n$ is the maximal volume ellipsoid of $K$ (see, e.g.,~\cite{isotropic-book}*{Sect.~1.5.1}). Similarly, define $\J_n^s$ to be the family of centrally symmetric convex bodies $K\in \K_n^s$ which are in John's position. For any $K\in \K_n$ (or $K\in\K_n^s$) there exists an affine image of $K$ in $\J_n$ (or in $\J_n^s$). 

John's theorem  (see e.g. \cite{aga-book}*{Thm.~2.1.3, Remark 2.1.17}) implies that 
\begin{equation}\label{eqn:John outer balls}
K\subset nB_2^n \qtq{for any} K\in\J_n, \qtq{and} K\subset \sqrt{n} B_2^n \qtq{for any} K\in \J_n^s.	
\end{equation}

\subsection{Quermassintegrals}
In order to bound $H_n$ and $H_{n}^{s}$ we will use the inequality~\eqref{eq:ourbound}, and so we start by discussing upper bounds on $|K+B_2^n|$, where $K$ is in John's position. 

By the Steiner's formula, for any $K\in\K_n$
\begin{equation}\label{eqn:steiner}
	|K+tB_2^n|=\sum_{j=0}^n \binom nj W_j(K) t^j,
\end{equation}
where 
\[
W_j(K)=V(\underset{n-j \text{ times}}{\underbrace{K,\dots,K}}, \underset{j \text{ times}}{\underbrace{B_2^n,\dots,B_2^n}})
\]
is the $j$-th quermassintegral of $K$ and $V(\cdot)$ is the mixed volume, see, e.g., \cite{aga-book}*{Sect.~1.1.5} or~\cite{isotropic-book}*{Sect.~1.4.2}. Additionally, $W_0(K)=|K|$, $W_1(K)=\partial (K)/n$, where $\partial(K)$ is the surface area of $K$, and $W_n(K)=|B_2^n|$. Also note that if $B_2^n\subset K$, then due to the monotonicity of mixed volumes we have $W_i(K)\ge W_j(K)$ for $i\le j$.

Next, we will discuss upper bounds on $W_i(K)$ for $K\in \J_n$ and $K\in \J_{n}^s$ respectively. 

Let $T^n$ be a regular simplex in $\E^n$ of unit edge length, and let $\Delta^n$ be a dilation of $T^n$ for which $B_2^n$ is the inscribed ball, then $\Delta^{n}=\sqrt{2(n+1)n} \; T^{n}$. Also, let $C^n:=[-1,1]^n$ be a cube circumscribed about $B^2_n$. Ball~\cite{Ba} proved that among all convex bodies in $\E^n$ simplexes have maximal volume ratio (volume ratio measures how much of the volume of the whole body can be contained in the largest inscribed ellipsoid) while for the symmetric ones such a maximizer is the cube~\cite{Ba-sym}. These results can be stated in our terms as follows:
\begin{equation}\label{eqn:volume}
	W_0(K)\le \begin{cases}
		W_0(\Delta^n), &\text{if } K\in\J_n,\\
		W_0(C^n), &\text{if } K\in\J_n^s.
	\end{cases}
\end{equation}

For $K\in\K_n$ and a direction $u\in\S^{n-1}$, the support function is defined as
$
h_K(u)=\sup\{\langle x,y\rangle: y\in K\}
$. Let $\sigma$ be the rotationally invariant probability measure on $\S^{n-1}$, then the mean width of $K$ is defined by
\[
w(K)=\int_{\S^{n-1}} h_K(u)\,d\sigma(u)
.\footnote{Note that sometimes the mean width is defined as $\int_{\S^{n-1}} h_K(u)+h_{K}(-u)\,d\sigma(u)
	$.} \]

We have $W_{n-1}(K)=|B_2^n|w(K)$, which is a partial case of the Kubota's formula~\cite{aga-book}*{eq (1.1.1)}. Barthe~\cite{Bar}*{Thm. 3} proved~\footnote{The result of Barthe is stated in terms of $\ell$-norm, and can be translated in the language of mean width using the formula~\cite{Bar}*{p. 685} for the $\ell$-norm of the dual body.} that among the bodies from $\J_n$, the mean width is maximized for $\Delta^n$, while Schechtman and Schmuckenschl\"ager~\cite{Sc-S} (the proof is also included in~\cite{Bar}*{Thm. 2}) remarked that $C^n$ maximizes the mean width among the bodies from $\J_n^s$, i.e.,
\begin{equation}\label{eqn:mean width}
	W_{n-1}(K)\le \begin{cases}
		W_{n-1}(\Delta^n), &\text{if } K\in\J_n,\\
		W_{n-1}(C^n), &\text{if } K\in\J_n^s.
	\end{cases}
\end{equation}
We estimate $w(\Delta^n)$ for the required values of $n$ in the next section. It is known~\cite{Fi}, \cite{DCG-book}*{Sec.~13.2.3} (or can be obtained by a straightforward computation) that $w(C^n)=2\frac{|B_2^{n-1}|_{n-1}}{|B_{2}^{n}|_n}$  for $n\geq 2$, and so $W_{n-1}(C^{n})=2|B_{2}^{n-1}|$.

%

\begin{remark}\label{remark:surface}
	The results~\cite{Ba}*{Thm. 1, Thm. $1^\prime$} imply that $W_1(K)\le W_1(\Delta^n)$ for any $K\in\J_n$ (recall that $W_1(K)=\partial(K)/n$, where $\partial(K)$ is the surface area of $K$). This would not lead to any improvements in our context as $W_1(\Delta^n)=W_0(\Delta^n)$ and we get the same upper bound on $W_1(K)$ as from $W_1(K)\le W_0(K)$. A similar remark regarding the inequality $W_1(K)\leq W_1(C^{n})$ also holds for $K \in \J^s_n$, and follows from~\cite{Ba}*{Thm. 2},~\cite{Ba-sym}*{Thm. 3}.
\end{remark}

\begin{remark}
	It is natural to conjecture that $W_j(K)\le W_j(\Delta^n)$ (and that $W_j(K)\le W_j(C^n)$) for any $K\in\J_n$ (respectively, $K\in\J_n^s$) and all $0\le j\le n$. This is an obvious equality for $j=n$ and is valid for $j\in\{0,1,n-1\}$ as described above (\eqref{eqn:volume}, \cref{remark:surface}, \eqref{eqn:mean width}). 
\end{remark}

Finally, we need a Bonnesen-style inequality by Bokowski and Heil~\cite{Bo-He}. If $K\in\K_n$ satisfies $K\subset R B_2^n$, then for all $0\le i<j<k\le n$
\begin{equation}\label{eqn:bonnesen}
	W_j(K) \le \frac{(k-j)(i+1)R^iW_i(K)+(j-i)(k+1)R^kW_k(K)}{(k-i)(j+1)R^j}=:B_{R,i,j,k}(W_i(K),W_k(K)).
\end{equation}
Recall that in our settings we can choose $R$ according to~\eqref{eqn:John outer balls}.

\subsection{Density of coverings by balls} One of the approaches to construct specific efficient coverings of the space by balls is to use lattices, see~\cite{Co-Sl}*{Ch.~2}. Considering the $A_n^{*}$ lattice yields the following estimate (valid for all $n$):
\begin{equation}\label{eqn:covering density}
	\theta(B_2^n)\le |B_2^n|\sqrt{n+1}\left(\frac{n(n+2)}{12(n+1)}\right)^{n/2}.
\end{equation}
It turns out that the above is optimal (smallest possible \emph{lattice} covering density) for $2\le n\le 5$ and provides the best known upper bound on $\theta(B_2^n)$ in most dimensions $10\le n\le 21$. Better lattices were found by Sch\"urmann and Vallentin~\cite{SchVall} for $6\le n\le 8$, which is important for our applications. To the best of our knowledge, there have been no improvements after the work~\cite{SchVall}, where an interested reader can find a brief survey of the topic. We list the corresponding lattice covering densities of $\E^n$ by balls in \cref{tbl:covdens}.
\begin{table}[h!]
	\begin{center}
		\begin{tabular}{|c|c||c|c|}
			\hline
			$n$ & least known lattice covering density & $n$ & least known lattice covering density  \\
			\hline
			\hline
			$2$ & $1.209199$ & $8$ & $3.142202$ \\
			\hline
			$3$ & $1.463505$ & $9$ & $4.340185$ \\
			\hline
			$4$ & $1.765529$ & $10$ & $5.251713$ \\
			\hline
			$5$ & $2.124286$ & $11$ & $5.598338$ \\
			\hline
			$6$ & $2.464801$ & $12$ & $7.510113$ \\
			\hline
			$7$ & $2.900024$ & $13$ & $7.864060$ \\
			\hline
		\end{tabular}
	\end{center}
	\caption{Least known lattice covering densities, as in~\cite{SchVall}.}
	\label{tbl:covdens}
\end{table}

\section{Estimates on mean width of regular simplex}\label{sec:mean width simplex}

The values of $2w(T^n)$ for $2\le n\le 6$ expressed as certain integrals and numerically evaluated with high precision can be found in~\cite{Fi}. We remark that in~\cite{Fi} expected values of widths of simplexes are considered, while the mean width as we defined here (and as commonly defined in the literature on geometry, see~\cite{aga-book,isotropic-book}) is the expected value of the support function, this discrepancy results in the mean width of $T^{n}$ from~\cite{Fi} (and~\cite{Su}) being equal to $2w(T^{n})$. We need upper estimates of $w(T^n)$ for $n=7,8$, which we were unable to find in the literature. We present a simple computational technique to obtain upper and lower estimates on $w(T^n)$ which will suffice for our purposes. 

The starting point is the following representation that follows the work~\cite{Fi} by Finch and references therein, in particular, Sun~\cite{Su}. 

Let $ F(x)=\frac{1}{\sqrt{2\pi}}\int_{-\infty}^{x}e^{-t^2/2}\; dt$ be the comulative distribution function of the standard normal distribution, then $F(x)=\frac{1}{2}+\frac{1}{2}\erf(\frac{x}{\sqrt{2}})$, where $\erf(z)=\frac2{\sqrt{\pi}}\int_0^ze^{-t^2}\,dt$ is the error function. 

For $n\geq 0$ let
\begin{align*}
g_{n+1}(x)&:=1-F(x)^{n+1}-(1-F(x))^{n+1}= 1-\left(\frac{1+\erf(\tfrac{x}{\sqrt{2}})}2\right)^{n+1}-\left(\frac{1-\erf(\tfrac{x}{\sqrt{2}})}2\right)^{n+1}.
\end{align*}
The following formula for mean width follows from~\cite{Su} and was derived in~\cite{Fi}:
\begin{align}\label{eqn:mean width integral}
w(T^n)= & \frac{\Gamma(\tfrac n2)}{2\Gamma(\tfrac {n+1}2)}\int_0^\infty g_{n+1}(x)\,dx. 
\end{align}
Next, we estimate the integral in the formula.
\begin{proposition}\label{prop:mean width estimates}
	For any $a>2$ and any positive integers $n$, $N$, the following estimates hold:
	\begin{equation}\label{eqn:bound with integral sums}
		\frac aN\sum_{k=1}^N g_{n+1}\left(\tfrac{ak}N\right)
		\le \int_0^\infty g_{n+1}(x)\,dx \le
		 \frac aN
		 \sum_{k=0}^{N-1} g_{n+1}\left(\tfrac{ak}N\right)
		 +\frac{n+1}{\sqrt{2\pi}} \exp\left(-a\right)		 .
	\end{equation}
\end{proposition}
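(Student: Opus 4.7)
The plan is to split the integral at $a$ and handle each piece using the monotonicity of $g_{n+1}$. First I would verify that $g_{n+1}$ is decreasing on $[0,\infty)$. Since $F'(x)>0$ and $F(x)\ge 1-F(x)$ for $x\ge 0$, differentiation gives
\[
g_{n+1}'(x)=-(n+1)F'(x)\bigl(F(x)^n-(1-F(x))^n\bigr)\le 0,
\]
so $g_{n+1}$ is nonnegative and decreasing on $[0,\infty)$. Consequently, for the uniform partition $x_k=ak/N$ of $[0,a]$, the right Riemann sum bounds $\int_0^a g_{n+1}$ from below and the left Riemann sum bounds it from above:
\[
\frac{a}{N}\sum_{k=1}^N g_{n+1}\!\left(\tfrac{ak}{N}\right)\le \int_0^a g_{n+1}(x)\,dx\le \frac{a}{N}\sum_{k=0}^{N-1} g_{n+1}\!\left(\tfrac{ak}{N}\right).
\]

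For the lower bound in \eqref{eqn:bound with integral sums} I would just discard the nonnegative tail $\int_a^\infty g_{n+1}\ge 0$. For the upper bound I need to estimate $\int_a^\infty g_{n+1}(x)\,dx$ by $\frac{n+1}{\sqrt{2\pi}}e^{-a}$. Starting from the elementary factorization $1-t^{n+1}=(1-t)\sum_{k=0}^n t^k\le (n+1)(1-t)$ for $t\in[0,1]$, applied to $t=F(x)$, together with $(1-F(x))^{n+1}\ge 0$, yields
\[
g_{n+1}(x)\le 1-F(x)^{n+1}\le (n+1)(1-F(x)).
\]
Thus the problem reduces to showing $\int_a^\infty (1-F(x))\,dx\le \frac{1}{\sqrt{2\pi}}e^{-a}$ whenever $a>2$.

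For the latter, I would write $1-F(x)=\frac{1}{\sqrt{2\pi}}\int_x^\infty e^{-t^2/2}\,dt$ and swap the order of integration via Fubini:
\[
\int_a^\infty (1-F(x))\,dx=\frac{1}{\sqrt{2\pi}}\int_a^\infty (t-a)e^{-t^2/2}\,dt.
\]
For $t\ge a>2$ one has $t^2/2-t=t(t-2)/2\ge 0$, so $e^{-t^2/2}\le e^{-t}$ pointwise on $[a,\infty)$. Substituting $u=t-a$ gives
\[
\int_a^\infty (t-a)e^{-t^2/2}\,dt\le \int_0^\infty u e^{-(u+a)}\,du=e^{-a}\,\Gamma(2)=e^{-a},
\]
and combining with the previous estimate yields $\int_a^\infty g_{n+1}(x)\,dx\le \frac{n+1}{\sqrt{2\pi}}e^{-a}$, completing the upper bound.

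None of the steps presents a real obstacle. The only slightly delicate point is making sure that the pointwise comparison $e^{-t^2/2}\le e^{-t}$ is justified on the whole range of integration, which is exactly where the hypothesis $a>2$ is used; otherwise one would either need a sharper Mills-ratio inequality or a worse constant in the tail.
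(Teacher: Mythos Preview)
Your proof is correct and follows essentially the same route as the paper: split the integral at $a$, use monotonicity of $g_{n+1}$ to sandwich $\int_0^a g_{n+1}$ between the left and right Riemann sums, bound $g_{n+1}(x)\le (n+1)(1-F(x))$, and then use $e^{-t^2/2}\le e^{-t}$ for $t\ge 2$ to control the tail. The only cosmetic difference is in the order of operations for the tail: the paper bounds $1-F(x)=\tfrac{1}{\sqrt{2\pi}}\int_x^\infty e^{-t^2/2}\,dt\le \tfrac{1}{\sqrt{2\pi}}e^{-x}$ pointwise and then integrates in $x$, whereas you integrate first and apply Fubini before invoking the same pointwise comparison; both lead to the identical bound $\tfrac{n+1}{\sqrt{2\pi}}e^{-a}$.
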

\begin{proof}
We directly estimate the ``tail'' of the integral and use simple endpoint Riemann sums for the ``main'' part of the integral in~\eqref{eqn:mean width integral}.
	
Recalling that $\frac{d}{dx}F(x)=\frac{1}{\sqrt{2\pi}}e^{-x^2/2}$ it is straightforward to verify that $g_{n+1}$ is a positive strictly decreasing function on $[0,\infty)$. Hence, considering the upper and the lower Riemann sums for $\int_0^a g_{n+1}(x)\,dx$ and the uniform partition of $[0,a]$ into $N$ subintervals, we obtain
\begin{equation}\label{eqn:integral sums}
			\frac aN\sum_{k=1}^N g_{n+1}\left(\tfrac{a\cdot k}N\right)
	\le \int_0^a g_{n+1}(x)\,dx \le
	\frac aN
	\sum_{k=0}^{N-1} g_{n+1}\left(\tfrac{a\cdot k}N\right).
\end{equation}
For any $x>a\geq 2$
\begin{align*}
	g_{n+1}(x) &\le 1-F(x)^{n+1} 
	= 1-\left(1-F(-x)\right)^{n+1}=1-\left(1-\frac{1}{\sqrt{2\pi}}\int_{x}^{\infty}e^{-t^2/2}\,dt\right)^{n+1} \\
	&\le \frac{n+1}{\sqrt{2\pi}}\int_{x}^\infty e^{-t^2/2}\,dt
	\le \frac{n+1}{\sqrt{2\pi}}\int_{x}^\infty e^{-t}\,dt
	=\frac{n+1}{\sqrt{2\pi}}\exp\left(-x \right),
\end{align*}
so
\begin{equation*}
	\int_a^\infty g_{n+1}(x)\,dx \le \frac{n+1}{\sqrt{2\pi}} \exp\left(-a\right).
\end{equation*}
Taking this inequality and the evident $\int_a^\infty g_{n+1}(x)\,dx\ge0$ into account, we deduce~\eqref{eqn:bound with integral sums} from~\eqref{eqn:mean width integral} and~\eqref{eqn:integral sums}.
\end{proof}

Since the error function can be computed numerically with any given precision, employing a simple SageMath (\cite{sagemath}) computation~\cite{github}, we obtain the following corollary.
\begin{corollary}\label{cor:mw}
	The following inequalities hold: 
	\begin{align*}
		0.4208 \le & w(T^5) \le 0.4215, \\
		0.4067 \le & w(T^6) \le 0.407, \\ 
		0.39425 \le & w(T^7) \le 0.39427, \\ 
		0.383 \le & w(T^8) \le 0.38301.
	\end{align*}
\end{corollary}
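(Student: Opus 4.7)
The plan is to invoke \cref{prop:mean width estimates} for each $n\in\{5,6,7,8\}$ with judiciously chosen parameters $a$ and $N$, and then multiply the resulting bounds on $\int_0^\infty g_{n+1}(x)\,dx$ by the gamma-function prefactor $\frac{\Gamma(n/2)}{2\Gamma((n+1)/2)}$ from \eqref{eqn:mean width integral}, which can be reduced to a rational multiple of $\sqrt{\pi}$ via the duplication formula and therefore evaluated to arbitrary precision.

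To quantify the tuning of $a$ and $N$, note that the gap between the upper and lower bounds supplied by \cref{prop:mean width estimates} equals $\frac{a}{N}\bigl(g_{n+1}(0)-g_{n+1}(a)\bigr)+\frac{n+1}{\sqrt{2\pi}}e^{-a}$, where $g_{n+1}(0)=1-2^{-n}<1$. The tightest target interval in the statement (for $w(T^7)$) has width $2\cdot 10^{-5}$, while the prefactor $\frac{\Gamma(n/2)}{2\Gamma((n+1)/2)}$ is at most roughly $0.35$ in our range. Thus it suffices to take, for instance, $a=20$, which makes the exponential tail less than $10^{-7}$ uniformly for $n\le 8$, together with $N$ of order $10^{6}$ so that the endpoint Riemann-sum error $a/N$ is comfortably below the required tolerance for every $n$ under consideration.

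The numerical evaluation itself is then a mechanical high-precision computation: since $\erf$, $\Gamma$, and $\sqrt{\pi}$ are all available in SageMath to arbitrary precision, the sums $\frac{a}{N}\sum_{k} g_{n+1}(ak/N)$ can be computed with rigorous rounding control, and combined with the closed-form prefactor. The resulting certified intervals are then compared against those claimed in the statement. The accompanying code is available at \cite{github}.

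The only real obstacle is not mathematical but computational efficiency: endpoint Riemann sums converge only at rate $O(1/N)$, so separating the narrow intervals (especially for $n=7,8$) forces $N$ to be large. If runtime became a concern, one could replace \eqref{eqn:integral sums} with a higher-order quadrature (e.g. trapezoidal or Simpson's rule, both easily made rigorous using the monotonicity and smoothness of $g_{n+1}$) to drastically reduce $N$; however, the crude endpoint sums already suffice for \cref{tbl:hn,tbl:hns} and require no additional certification beyond monotonicity, which was already established inside the proof of \cref{prop:mean width estimates}.
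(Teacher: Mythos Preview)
Your proposal is correct and follows essentially the same approach as the paper: apply \cref{prop:mean width estimates} with suitable $a$ and $N$, multiply by the gamma prefactor from \eqref{eqn:mean width integral}, and verify the resulting certified intervals via a SageMath script (the paper's own proof is in fact terser, simply pointing to \cite{github} and noting that the computation takes under two hours). Your added quantitative discussion of how large $a$ and $N$ need to be, and the remark on higher-order quadrature as an optional speedup, are helpful elaborations but do not change the underlying method.
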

These estimates for $n=5,6$ are consistent with the values obtained in~\cite{Fi}; we include them here for completeness. While our computational method allows to obtain a tighter gap in the estimates, we only derived what was necessary for our application of estimating $H_n$ from above ensuring that no further improvement is possible (even if the value of the lower bound on $w(T^n)$ is used, the upper bound on the integer value $H_n$ does not improve). The computations take less than two hours on a modern personal computer.

\section{Proof of \cref{thm:5678}}\label{sec:proof1}

Let us begin with the general (not necessarily symmetric) case. Since $C(K,\intr(K))$ is invariant under affine transforms, we can assume that $K\in \J^n$. Then using $B_2^n\subset K$, compactness and~\eqref{eqn:rogerszong}, we have
\begin{align*}
	C(K,\intr(K))&\le C(K,\intr(B_2^n))\le \lim_{r\to1^-} C(K,rB_2^n) \\
	&\le \lim_{r\to1^-} \frac{|K-rB_2^n|}{|rB_2^n|}\theta(B_2^n)
	= \frac{|K+B_2^n|}{|B_2^n|}\theta(B_2^n).
\end{align*} 
Hence, applying~\eqref{eqn:steiner},
\begin{equation}\label{eqn:est with wj}
	C(K,\intr(K))\le \frac{\theta(B_2^n)}{|B_2^n|} \sum_{j=0}^n \binom nj W_j(K).
\end{equation}
The upper bounds for $\theta(B_2^n)$ come from Table~\ref{tbl:covdens},
so it remains to estimate $W_i(K)$. As $\Delta^n=\sqrt{2n(n+1)}T^n$,  by~\eqref{eqn:volume}, 
\begin{equation}\label{eqn:est vol}
	W_i(K)\le W_0(K)\le W_0(\Delta^n)=(2n(n+1))^{n/2}|T^n|=\frac{n^{n/2}(n+1)^{(n+1)/2}}{n!}
\end{equation}
for any $0\le i\le n$. Using~\eqref{eqn:mean width},
\begin{equation}\label{eqn:est mw}
	W_{n-1}(K)\le W_{n-1}(\Delta^n)=|B_2^n|w(\Delta^n)=|B_2^n|\sqrt{2n(n+1)}w(T^n).
\end{equation}
Trivially, $W_n(K)=|B_2^n|$.

Next we combine the above, and use~\eqref{eqn:bonnesen} (valid with $R=n$ due to~\eqref{eqn:John outer balls}) for appropriate parameters. The calculations were performed in the script~\cite{github}.

{\bf Bound on $H_5$.} We apply~\eqref{eqn:est with wj} and estimate the quermassintegrals as follows. For $0\le i\le 2$, use~\eqref{eqn:est vol}; \eqref{eqn:est mw} and~\eqref{eqn:bonnesen} give $W_4(K)\le W_4(\Delta^5)$ and $W_3(K)\le B_{5,2,3,4}(W_0(\Delta^5),W_4(\Delta^5))$; recall that $W_5(K)=|B_2^5|$. Combining the above, using \cref{cor:mw} and calculating the actual value of the bound yields $H_5\le 933$.

{\bf Bound on $H_6$.} The arguments are similar to the previous case. The required application of \eqref{eqn:bonnesen} is $W_4(K)\le B_{6,3,4,5}(W_0(\Delta^6),W_5(\Delta^6))$. The result is $H_6\le 6137$. 

{\bf Bounds on $H_n$ for $n=7,8$.} We proceed similarly with the only difference that $W_j(K)\le B_{n,n-4,j,n-1}(W_0(\Delta^n),W_{n-1}(\Delta^n))$ is used for $j\in\{n-3,n-2\}$ yielding $H_7\le 41377$ and $H_8\le 284096$.

When the body is centrally symmetric, we follow a similar route, with the following differences. In place of~\eqref{eqn:est vol} we have
\begin{equation}\label{eqn:est vol cube}
	W_i(K)\le W_0(K)\le W_0(C^n)=2^n
\end{equation}
for any $0\le i\le n$. Using~\eqref{eqn:mean width} we get 
\begin{equation}\label{eqn:est W_n-1 cube}
W_{n-1}(K)\le W_{n-1}(C^{n})=2|B_2^{n-1}|.
\end{equation}
Finally, by~\eqref{eqn:John outer balls}, the inequality~\eqref{eqn:bonnesen} can be used with $R=\sqrt{n}$.

{\bf Bound on $H_4^s$.} Apply~\eqref{eqn:est with wj} and bound the quermassintegrals as follows. For $i=0,1$ use~\eqref{eqn:est vol cube}; use~\eqref{eqn:est W_n-1 cube} to get $W_{3}(K)\leq2|B_{2}^{3}|$; we have $W_4(K)=W_4(C^{4})=|B_2^4|$;  by~\eqref{eqn:bonnesen} $W_2(K)\le B_{2,1,2,3}(W_1(C^4),W_3(C^4))$. Combining the above and calculating the value of the bound yields $H_4^s\le 72$. 

{\bf Bounds on $H_n^s$ for $n=5,6$.} We use similar arguments: \eqref{eqn:est vol cube} for $i=0,1,2$; $W_n=W_n(C^n)=|B_{2}^{n}|$; \eqref{eqn:est W_n-1 cube} for $i=n-1$;  and $W_j(K)\le B_{\sqrt{n},2,j,n-1}(W_0(C^n),W_{n-1}(C^n))$ for $3\le j\le n-2$, which imply $H_5^s\le 305$ and $H_6^s\le 1292$.

\begin{remark}
	In the above computations, we used the exact value for the density arising from $A_n^*$ lattice from the estimate~\eqref{eqn:covering density} for $n=4,5$. For $n=6,7,8$ we used the values of covering densities from~\cite{SchVall} presented in~\cref{tbl:covdens} increased by $5\cdot 10^{-6}$ as they were given to 6 decimal places, while we require an upper bound. Even if such an increase is not performed, the resulting \emph{integer} valued upper bounds in \cref{thm:5678} would not change, so the accuracy given in~\cite{SchVall} is more than sufficient for our needs.
\end{remark}

\section{Estimates via optimized Rogers bound}\label{sec:rogers opt}

\begin{proposition}\label{cor:hn and hns}
	Suppose $n\ge 3$. If
\begin{equation}\label{eqn:rog function}
	r_n=\min_{x\in(0,1/n)} f_n(x), \qtq{where} f_n(x)=(1+x)^n(1-n\ln(x)),
\end{equation}	
then
\begin{equation}\label{eqn:rog cors}
	\theta(K)\le r_n \qtq{for any} K\in \K_n,
	\quad H_n\le \binom{2n}{n} r_n, \qtq{and}
	H_n^s\le 2^n r_n.
\end{equation}
\end{proposition}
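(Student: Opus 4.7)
The plan is to prove \cref{cor:hn and hns} in two stages: first show $\theta(K) \le r_n$ for any $K \in \K_n$, then deduce the bounds on $H_n$ and $H_n^s$ via the Rogers--Zong inequality~\eqref{eqn:rogerszong}.

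For the first stage, I would revisit Rogers' original proof of~\eqref{eqn:rogers}, which constructs a covering of a large cube by translates of $K$ using a probabilistic/lattice construction. That construction depends on a parameter (here denoted $x$) controlling a trade-off between two contributions to the covering density: an initial thickened covering by translates of $(1+x)K$ contributes a density factor of $(1+x)^n$ from the volumetric inflation, while an additional probabilistic layer needed to patch the residual uncovered set contributes a factor of order $-n\ln x$, arising from a Chernoff-type estimate on the expected uncovered volume after $N$ independent random placements. Rogers fixed a specific value of $x$ depending on $n$ in order to produce the clean closed-form bound $n(\ln n + \ln\ln n + 5)$ valid uniformly in $n$; keeping $x$ free throughout the same argument yields the sharper inequality $\theta(K) \le (1+x)^n(1 - n\ln x) = f_n(x)$ for each $x \in (0, 1/n)$, and taking the infimum gives $\theta(K) \le r_n$.

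For the second stage, apply~\eqref{eqn:rogerszong} with $L = (1-\eps)K$ for $\eps > 0$ and let $\eps \to 0^+$, invoking compactness exactly as at the beginning of the proof of \cref{thm:5678}. This gives
\begin{equation*}
    C(K,\intr(K)) \le \frac{|K-K|}{|K|}\,\theta(K) \le \frac{|K-K|}{|K|}\,r_n.
\end{equation*}
For $K \in \K_n^s$, $K-K = 2K$ and hence $|K-K|/|K| = 2^n$, yielding $H_n^s \le 2^n r_n$. For general $K \in \K_n$, the Rogers--Shephard inequality gives $|K-K|/|K| \le \binom{2n}{n}$, yielding $H_n \le \binom{2n}{n} r_n$.

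The main obstacle is the first stage: one must carefully track the parameter $x$ through Rogers' original argument and verify that the resulting bound has precisely the form $(1+x)^n(1 - n\ln x)$ for each $x \in (0, 1/n)$, with no extraneous lower-order terms. Since Rogers' proof already contains all the necessary ingredients and merely optimizes $x$ at the end, the required work is essentially careful bookkeeping together with a numerical minimization of $f_n(x)$ for each fixed $n$; the second stage is then routine, paralleling the template laid out in Section~1.
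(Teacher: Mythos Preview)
Your proposal is correct and follows essentially the same approach as the paper. The only difference is that the paper observes the parametrized bound $\theta(K)\le(1+x)^n(1-n\ln x)$ is already written explicitly in Rogers' original paper~\cite{Ro}*{p.~5} (with the specific choice $x=\tfrac{1}{n\ln n}$ made only afterward to obtain~\eqref{eqn:rogers}), so the ``careful bookkeeping'' you anticipate in the first stage is in fact already done and can simply be cited; your second stage is identical to the paper's.
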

\begin{proof}
	The first inequality in~\eqref{eqn:rog cors} is established in~\cite{Ro}*{p.~5}. (The bound~\eqref{eqn:rogers} was obtained in~\cite{Ro} by taking $x=\frac1{n\ln n}$ in~\eqref{eqn:rog function}.) The other two inequalities in~\eqref{eqn:rog cors} follow from~\eqref{eqn:rogerszong} in the same way as~\eqref{eqn:asymptotic} and~\eqref{eqn:sym-asymp} do. 
\end{proof}

For our purposes, it suffices to use the straightforward $r_n\le \min\{ f_n(\frac{j}{Nn}):1\le j\le N-1\}$ with $N=1000$. The resulting bounds on $r_n$ for $3\le n\le 14$ are given in~\cref{tbl:Rogers} (the computed values of $\min\{ f_n(\frac{j}{Nn}):1\le j\le N-1\}$ are rounded up in the sixth digit, so they represent actual upper bounds), and the estimates on $H_n$ and $H_n^s$ can be found in \cref{tbl:hn,tbl:hns}. For the computations, see~\cite{github}. We remark that $\max\{\theta(K),\ K\in\K_2\}=\frac32$ was established by F\'ary~\cite{Fary}. 

	\begin{table}[h!]
	\begin{center}
		\begin{tabular}{|c|c||c|c|}
			\hline
			$n$ & $\max\{\theta(K),\ K\in\K_n\}\le$ & $n$ & $\max\{\theta(K),\ K\in\K_n\}\le$  \\
			\hline
			\hline
			$3$ & 10.064123 & $9$ & 42.458503 \\
			\hline
			$4$ & 14.916986 & $10$ & 48.445515 \\
			\hline
			$5$ & 20.024359 & $11$ & 54.551530 \\
			\hline
			$6$ & 25.362768 & $12$ & 60.765566 \\
			\hline
			$7$ & 30.898293 & $13$ & 67.078451 \\
			\hline
			$8$ & 36.603890 & $14$ & 73.482436 \\
			\hline
		\end{tabular}
	\end{center}
	\caption{Upper bounds on $\max\{\theta(K),\ K\in\K_n\}$, for $3\le n\le 14$.}
	\label{tbl:Rogers}
\end{table}

\begin{bibsection}
\begin{biblist}
	
\bib{github}{article}{
	author={Arman, A.},
	author={Bondarenko, A.},
	author={Prymak, A.},
	title={Sagemath script for computations on Hadwiger's covering problem in small dimensions},
	eprint={https://github.com/andriyprm/illum-small-dims/releases/tag/v1}	
}

\bib{aga-book}{book}{
	author={Artstein-Avidan, Shiri},
	author={Giannopoulos, Apostolos},
	author={Milman, Vitali D.},
	title={Asymptotic geometric analysis. Part I},
	series={Mathematical Surveys and Monographs},
	volume={202},
	publisher={American Mathematical Society, Providence, RI},
	date={2015},
	pages={xx+451},
}
	
\bib{Ba}{article}{
	author={Ball, Keith},
	title={Volume ratios and a reverse isoperimetric inequality},
	journal={J. London Math. Soc. (2)},
	volume={44},
	date={1991},
	number={2},
	pages={351--359},
}

\bib{Ba-sym}{article}{
	author={Ball, Keith},
	title={Volumes of sections of cubes and related problems},
	conference={
		title={Geometric aspects of functional analysis (1987--88)},
	},
	book={
		series={Lecture Notes in Math.},
		volume={1376},
		publisher={Springer, Berlin},
	},
	isbn={3-540-51303-5},
	date={1989},
	pages={251--260},
	review={\MR{1008726}},
	doi={10.1007/BFb0090058},
}
	
\bib{Bar}{article}{
	author={Barthe, Franck},
	title={An extremal property of the mean width of the simplex},
	journal={Math. Ann.},
	volume={310},
	date={1998},
	number={4},
	pages={685--693},
}

\bib{Be}{article}{
	author={Bezdek, K\'{a}roly},
	author={Khan, Muhammad A.},
	title={The geometry of homothetic covering and illumination},
	conference={
		title={Discrete geometry and symmetry},
	},
	book={
		series={Springer Proc. Math. Stat.},
		volume={234},
		publisher={Springer, Cham},
	},
	date={2018},
	pages={1--30},
}

\bib{Bo-He}{article}{
	author={Bokowski, J\"{u}rgen},
	author={Heil, Erhard},
	title={Integral representations of quermassintegrals and Bonnesen-style
		inequalities},
	journal={Arch. Math. (Basel)},
	volume={47},
	date={1986},
	number={1},
	pages={79--89},
}

\bib{Bo}{article}{
	author={Boltyanski, V.},
	title={The problem of illuminating the boundary of a convex body},
	journal={Izv. Mold. Fil. AN SSSR},
	volume={76},
	date={1960},
	pages={77--84}
}


\bib{isotropic-book}{book}{
	author={Brazitikos, Silouanos},
	author={Giannopoulos, Apostolos},
	author={Valettas, Petros},
	author={Vritsiou, Beatrice-Helen},
	title={Geometry of isotropic convex bodies},
	series={Mathematical Surveys and Monographs},
	volume={196},
	publisher={American Mathematical Society, Providence, RI},
	date={2014},
	pages={xx+594},
}

\bib{CHMT}{article}{
	author={Marcelo Campos},
	author={Peter van Hintum},
	author={Robert Morris},
	author={Marius Tiba},
	title={Towards Hadwiger's conjecture via Bourgain slicing},
	journal={International Mathematics Research Notices},
	date={9 September 2023},
	doi={10.1093/imrn/rnad198},
}

\bib{Co-Sl}{book}{
	author={Conway, J. H.},
	author={Sloane, N. J. A.},
	title={Sphere packings, lattices and groups},
	series={Grundlehren der Mathematischen Wissenschaften [Fundamental
		Principles of Mathematical Sciences]},
	volume={290},
	edition={3},
	note={With additional contributions by E. Bannai, R. E. Borcherds, J.
		Leech, S. P. Norton, A. M. Odlyzko, R. A. Parker, L. Queen and B. B.
		Venkov},
	publisher={Springer-Verlag, New York},
	date={1999},
	pages={lxxiv+703},
	isbn={0-387-98585-9},
}

\bib{D}{article}{
	author={Mingyang Diao},
	title={On Hadwiger covering problem in five- and six-dimensional Euclidean spaces},
	journal={M.Sc. thesis, University of Manitoba},
	date={2022},
	eprint={http://hdl.handle.net/1993/36378}
}

\bib{Fary}{article}{
	author={F\'{a}ry, Istv\'{a}n},
	title={Sur la densit\'{e} des r\'{e}seaux de domaines convexes},
	language={French},
	journal={Bull. Soc. Math. France},
	volume={78},
	date={1950},
	pages={152--161},
	issn={0037-9484},
}

\bib{Fi}{article}{
	author={Steven R. Finch},
	title={Mean Width of a Regular Simplex},
	eprint={https://arxiv.org/abs/1111.4976v2}
}

\bib{Go-Ma}{article}{
	author={Gohberg, I. C.},
	author={Markus, A. S.},
	title={A certain problem about the covering of convex sets with homothetic ones},
	language={Russian},
	journal={Izvestiya Moldavskogo Filiala Akademii Nauk SSSR},
	volume={10/76},
	date={1960},
	pages={87--90}
}

\bib{DCG-book}{collection}{
	title={Handbook of discrete and computational geometry},
	series={CRC Press Series on Discrete Mathematics and its Applications},
	editor={Goodman, Jacob E.},
	editor={O'Rourke, Joseph},
	note={Edited by Jacob E. Goodman and Joseph O'Rourke},
	publisher={CRC Press, Boca Raton, FL},
	date={1997},
	pages={xvi+991},
	isbn={0-8493-8524-5},
	review={\MR{1730156}},
}

\bib{Ha}{article}{
	author={Hadwiger, H.},
	title={Ungel\"{o}stes Probleme Nr. 20},
	language={German},
	journal={Elem. Math.},
	volume={12},
	date={1957},
	pages={121}
}


\bib{HSTV}{article}{
	author={Huang, Han},
	author={Slomka, Boaz A.},
	author={Tkocz, Tomasz},
	author={Vritsiou, Beatrice-Helen},
	title={Improved bounds for Hadwiger's covering problem via thin-shell
		estimates},
	journal={J. Eur. Math. Soc. (JEMS)},
	volume={24},
	date={2022},
	number={4},
	pages={1431--1448},
}

\bib{J}{article}{
	author={John, Fritz},
	title={Extremum problems with inequalities as subsidiary conditions},
	conference={
		title={Studies and Essays Presented to R. Courant on his 60th
			Birthday, January 8, 1948},
	},
	book={
		publisher={Interscience Publishers, Inc., New York, N. Y.},
	},
	date={1948},
	pages={187--204},
}

\bib{La3ds}{article}{
	author={Lassak, Marek},
	title={Solution of Hadwiger's covering problem for centrally symmetric
		convex bodies in $E^3$},
	journal={J. London Math. Soc. (2)},
	volume={30},
	date={1984},
	number={3},
	pages={501--511},
}

\bib{La}{article}{
	author={Lassak, Marek},
	title={Covering the boundary of a convex set by tiles},
	journal={Proc. Amer. Math. Soc.},
	volume={104},
	date={1988},
	number={1},
	pages={269--272},
}

\bib{La2}{article}{
	author={Lassak, Marek},
	title={Covering a three-dimensional convex body by smaller homothetic
		copies},
	journal={Beitr\"{a}ge Algebra Geom.},
	volume={39},
	date={1998},
	number={2},
	pages={259--262},
}

\bib{Le}{article}{
	author={Levi, F. W.},
	title={\"{U}berdeckung eines Eibereiches durch Parallelverschiebung seines
		offenen Kerns},
	language={German},
	journal={Arch. Math. (Basel)},
	volume={6},
	date={1955},
	pages={369--370},
}

\bib{Pa}{article}{
	author={Papadoperakis, Ioannis},
	title={An estimate for the problem of illumination of the boundary of a
		convex body in $E^3$},
	journal={Geom. Dedicata},
	volume={75},
	date={1999},
	number={3},
	pages={275--285},
}

\bib{Pr}{article}{
	author={Prymak, Andriy},
	title={A new bound for Hadwiger's covering problem in $\Bbb E^3$},
	journal={SIAM J. Discrete Math.},
	volume={37},
	date={2023},
	number={1},
	pages={17--24},
}

\bib{Pr-Sh}{article}{
	author={Prymak, A.},
	author={Shepelska, V.},
	title={On the Hadwiger covering problem in low dimensions},
	journal={J. Geom.},
	volume={111},
	date={2020},
	number={3},
	pages={Paper No.42, 11},
}

\bib{Ro}{article}{
	author={Rogers, C. A.},
	title={A note on coverings},
	journal={Mathematika},
	volume={4},
	date={1957},
	pages={1--6},
}

\bib{Ro-Sh}{article}{
	author={Rogers, C. A.},
	author={Shephard, G. C.},
	title={The difference body of a convex body},
	journal={Arch. Math. (Basel)},
	volume={8},
	date={1957},
	pages={220--233},
}

\bib{Ro-Zo}{article}{
	author={Rogers, C. A.},
	author={Zong, C.},
	title={Covering convex bodies by translates of convex bodies},
	journal={Mathematika},
	volume={44},
	date={1997},
	number={1},
	pages={215--218},
}

\bib{sagemath}{manual}{
	author={Developers, The~Sage},
	title={{S}agemath, the {S}age {M}athematics {S}oftware {S}ystem
		({V}ersion 9.2)},
	date={2020},
	note={{\tt https://www.sagemath.org}},
}

\bib{Sc-S}{article}{
	author={Schechtman, G.},
	author={Schmuckenschl\"{a}ger, M.},
	title={A concentration inequality for harmonic measures on the sphere},
	conference={
		title={Geometric aspects of functional analysis},
		address={Israel},
		date={1992--1994},
	},
	book={
		series={Oper. Theory Adv. Appl.},
		volume={77},
		publisher={Birkh\"{a}user, Basel},
	},
	date={1995},
	pages={255--273},
}

\bib{SchVall}{article}{
	author={Sch\"{u}rmann, A.},
	author={Vallentin, F.},
	title={Computational approaches to lattice packing and covering problems},
	journal={Discrete Comput. Geom.},
	volume={35},
	date={2006},
	number={1},
	pages={73--116},
}

\bib{Su}{article}{
	author={Sun, Yi},
	title={Stochastic iterative algorithms for signal set design for Gaussian
		channels and optimality of the ${\rm L}2$ signal set},
	journal={IEEE Trans. Inform. Theory},
	volume={43},
	date={1997},
	number={5},
	pages={1574--1587},
}


\end{biblist}
\end{bibsection}

\end{document}